\newtheorem{Definition}{Definition}[section]
\newtheorem{definition}{Definition}
\newtheorem{theorem}{Theorem}
\newtheorem{remark}{Remark}
\newtheorem{example}{Example}
\newtheorem{proof}{Proof}
\title{Solving and Applying Fractal Differential Equations: Exploring Fractal Calculus in Theory and Practice}
\author{Alireza Khalili Golmankhaneh $^1$\footnote{Corresponding Author }, Donatella Bongiorno $^2$,\\
$^1$ Department of Physics, Urmia Branch, \\Islamic Azad University, Urmia, 63896, Iran\\
alirezakhalili2002@yahoo.co.in\\
$^2$ Department of Engineering, University of Palermo,
Palermo 90100, Italy\\
donatella.bongiorno@unipa.it}
\begin{document}

\maketitle

\begin{abstract}
In this paper, we delve into the fascinating realm of fractal calculus applied to fractal sets and fractal curves. Our study includes an exploration of the method analogues of the separable method and the integrating factor technique for solving $\alpha$-order differential equations. Notably, we extend our analysis to solve Fractal Bernoulli differential equations. The applications of our findings are then showcased through the solutions of problems such as fractal compound interest, the escape velocity of the earth in fractal space and time, and the estimation of time of death incorporating fractal time. Visual representations of our results are also provided to enhance understanding.
\end{abstract}

\textbf{Keywords:} FFractal calculus, Fractal curves,  Fractal differential equations\\
\textbf{MSC:} 28A80, 28A78, 28A35, 28A75,  	34A30

\section{Introduction}
Benoit Mandelbrot is credited with pioneering the field of fractal geometry \cite{Mandelbro}, which revolves around shapes possessing fractal dimensions that surpass their topological dimensions \cite{falconer1999techniques,jorgensen2006analysis}. These intricate fractals exhibit self-similarity and frequently demonstrate non-integer and complex dimensions \cite{Qaswet,Lapidus}. However, the analysis of fractals presents challenges, given that traditional geometric measures such as Hausdorff measure \cite{rogers1998hausdorff}, length, surface area, and volume are typically applied to standard shapes \cite{Ewqq}. Consequently, the direct application of these measures to fractal analysis becomes intricate \cite{Barnsley,Gregory,rosenberg2020fractal,Tosatti,bishop2017fractals,Shlomo}.
Researchers have tackled the problem of fractal analysis using approaches. These include analysis \cite{kigami2001analysis,Strichartz2},  measure theory \cite{giona1995fractal,freiberg2002harmonic,jiang1998some,Bongiorno23,bongiorno2018derivatives,bongiorno2015fundamental,bongiorno2015integral}, probabilistic methods \cite{Barlow}, fractional space and nonstandard methods \cite{stillinger1977axiomatic}, fractional calculus \cite{e25071008,uchaikin2013fractional,Trifcebook} and non standard methods \cite{nottale2011scale}.
Essential topological characteristics such as connectivity, ramification, and loopiness were exhibited by fractals and can be quantified using six independent dimension values. However, some fractal types may reduce the count of these dimensions due to their unique traits \cite{patino2023brief}.
Fracture network modeling was proposed, with a specific focus on accentuating fractal attributes within geological formations. Two innovative models were introduced: one centered around Bernoulli percolation within regular lattices, and the other delving into site percolation within scale-free networks integrated into 2D and 3D lattices. The revelation emerged that the effective spatial degrees of freedom in scale-free networks are dictated by the embedding dimension, in contrast to the degree distribution \cite{mondragon2023fractal}. The fractal characteristics impact percolation within self-similar networks \cite{cruz2023percolation}.
The effects of geometric confinement on point statistics in a quasi-low-dimensional system were studied. Specifically, attention was centered on nearest-neighbor statistics. Comprehensive numerical simulations were carried out using binomial point processes on quasi-one-dimensional rectangle strips, considering various confinement ratio values. The findings revealed that the distributions of nearest-neighbor distances followed an extreme value Weibull distribution, where the shape parameter was contingent on the confinement ratio \cite{balankin2023dimensional}.
The fractal characteristics impact formation factors in pore-fracture networks for different transport processes. A focus on deterministic infinitely ramified networks related to pre-fractal Sierpinski carpets was adopted. The network attributes effects on streamline constriction and transmission path tortuosity, emphasizing formation factor differences for diffusibility, electrical conductivity, and hydraulic permeability \cite{balankin2022formation}.
Fractal calculus is a way to extend calculus and deal with equations that have solutions, in the form of functions with fractal properties like fractal sets and curves \cite{parvate2009calculus,parvate2011calculus}. The beauty of fractal calculus lies in its simplicity and algorithmic approaches when compared to methods \cite{Alireza-book}.
The generalization of $F^{\alpha}$-calculus (FC) has been achieved through the utilization of the gauge integral method. The focus lies on the integration of functions within a subset of the real line containing singularities present in fractal sets \cite{golmankhaneh2016fractal}.
The utilization of FC is exemplified with respect to fractal interpolation functions and Weierstrass functions, which can exhibit non-differentiability and non-integrability in the context of ordinary calculus \cite{gowrisankar2021fractal}.
The utilization of non-local fractal derivatives to characterize fractional Brownian motion on thin Cantor-like sets was demonstrated. The proposal of the fractal Hurst exponent establishes its connection to the order of non-local fractal derivatives \cite{golmankhaneh2021fractalBro}.
Various methods have been employed to solve fractal differential equations, and their stability conditions have been determined \cite{golmankhaneh2019sumudu,Fourier1}.
The fractal Tsallis entropy on fractal sets and defines q-fractal calculus for deriving distributions were introduced. Nonlinear coupling conditions for statistical states were presented, and a relationship between fractal dimension and Tsallis entropy's q-parameter in the Hadron system was proposed \cite{golmankhaneh2021tsallis}.
Fractal functional differential equations were introduced as a mathematical framework for phenomena that encompass both fractal time and structure. The paper showcases the solution of fractal retarded, neutral, and renewal delay differential equations with constant coefficients, employing the method of steps and Laplace transforms \cite{golmankhaneh2023initial}.
The introduction of a novel generalized local fractal derivative operator and its exploration in classical systems via Lagrangian and Hamiltonian formalisms were undertaken. The practical applicability of the variational method in describing dissipative dynamical systems was showcased, and the Hamiltonian approach produced auxiliary constraints without reliance on Dirac auxiliary functions \cite{ELNABULSI2022112329}.
Furthermore, fractal stochastic differential equations have been defined, with categorizations for processes like fractional Brownian motion and diffusion occurring within mediums with fractal structures \cite{golmankhaneh2021equilibrium,khalili2019fractalcat,khalili2019random,banchuin2022noise,golmankhaneh2018sub}.
Local vector calculus within fractional-dimensional spaces, on fractals, and in fractal continua was developed. The proposition was put forth that within spaces characterized by non-integer dimensions, it was feasible to define two distinct del-operators-each operating on scalar and vector fields. Employing these del-operators, the foundational vector differential operators and Laplacian in fractional-dimensional space were formulated in a conventional manner. Additionally, Laplacian and vector differential operators linked with $F^{\alpha}$-derivatives on fractals were established \cite{balankin2023vector}.
Fractal calculus has been extended to include Cantor cubes and Cantor tartan \cite{golmankhaneh2018fractalt}, and the Laplace equation has been defined within this framework \cite{khalili2021laplace}.\\
The paper is structured as follows:\\
In Section \ref{1g}, a comparative and review analysis of fractal calculus is presented, focusing on its application to both fractal sets and curves.
Section \ref{2g} introduces the utilization of an integrating factor to solve fractal $\alpha$-order differential equations.
Section \ref{3g} outlines the application of the method of separation to fractal differential equations.
Furthermore, in Section \ref{4g}, various applications are discussed, involving the extension of standard models to account for fractal time.
Lastly, Section \ref{5g} is dedicated to concluding the paper.

\section{Overview of Fractal Calculus\label{1g}}
In this section, we present a comprehensive survey of the application of fractal calculus to the domains of fractal curves and fractal sets \cite{parvate2009calculus,parvate2011calculus,Alireza-book}.
\subsection{Fractal Calculus on Fractal Sets}
In this section, we present a concise overview of fractal calculus applied to fractal sets as summarized in \cite{parvate2009calculus}.
\begin{Definition}
The flag function of a set $F$ and a closed interval $I$ is defined as:
\begin{equation}
  \rho(F,I)=
  \begin{cases}
    1, & \text{if } F\cap I\neq\emptyset;\\
    0, & \text{otherwise}.
  \end{cases}
\end{equation}
\end{Definition}

\begin{Definition}
For a fractal set $F\subset [a,b]$, a subdivision $P_{[a,b]}$ of $[a,b]$, and a given $\delta>0$, the coarse-grained mass of $F\cap [a,b]$ is defined by
\begin{equation}
  \gamma_{\delta}^{\alpha}(F,a,b)=\inf_{|P|\leq
\delta}\sum_{i=0}^{n-1}\Gamma(\alpha+1)(t_{i+1}-t_{i})^{\alpha}
\rho(F,[t_{i},t_{i+1}]),
\end{equation}
where $|P|=\max_{0\leq i\leq n-1}(t_{i+1}-t_{i})$, and $0< \alpha\leq1$.
\end{Definition}

\begin{Definition}
The mass function of a fractal set $F\subset [a,b]$ is defined as the limit of the coarse-grained mass as $\delta$ approaches zero:
\begin{equation}
  \gamma^{\alpha}(F,a,b)=\lim_{\delta\rightarrow0}\gamma_{\delta}^{\alpha}(F,a,b).
\end{equation}
\end{Definition}

\begin{Definition}
For a fractal  set $F\subset [a,b]$, the $\gamma$-dimension of $F\cap [a,b]$ is defined as:
\begin{align}
  \dim_{\gamma}(F\cap
[a,b])&=\inf\{\alpha:\gamma^{\alpha}(F,a,b)=0\}\nonumber\\&
=\sup\{\alpha:\gamma^{\alpha}(F,a,b)=\infty\}
\end{align}
\end{Definition}

\begin{Definition}
The integral staircase function of order $\alpha$ for a fractal set $F$ is given by:
\begin{equation}
 S_{F}^{\alpha}(x)=
 \begin{cases}
   \gamma^{\alpha}(F,a_{0},x), & \text{if } x\geq a_{0}; \\
   - \gamma^{\alpha}(F,x,a_{0}), & \text{otherwise}.
 \end{cases}
\end{equation}
where $a_{0}$ is an arbitrary fixed real number.
\end{Definition}

\begin{Definition}
Let $F$ be an $\alpha$-perfect fractal set, let $f$ be a function defined on F and let $x\in F.$ The $F^{\alpha}$-derivative of $f$ at the point $x$ is defined as follows:
\begin{equation}
  D_{F}^{\alpha}f(x)=
  \begin{cases}
    \underset{ y\rightarrow
x}{F_{-}\text{lim}}~\frac{f(y)-f(x)}{S_{F}^{\alpha}(y)-S_{F}^{\alpha}(x)}, & \text{if } x\in F; \\
    0, & \text{otherwise}.
  \end{cases}
\end{equation}
if the fractal limit $F_{-}\text{lim}$ exists \cite{parvate2009calculus}.
\end{Definition}

\begin{Definition}
Let $I=[a,b]$.~Let $F$ be an $\alpha$-perfect fractal set such that $S^{\alpha}_F$ is finite on $I$. Let $f$ be a bounded function defined on F and let $x\in F.$ The $F^{\alpha}$-integral of $f$ on $I$ is defined as:
\begin{align}
  \int_{a}^{b}f(x)d_{F}^{\alpha}x&=\sup_{P_{[a,b]}}
\sum_{i=0}^{n-1}\inf_{x\in F\cap
I}f(x)(S_{F}^{\alpha}(x_{i+1})-S_{F}^{\alpha}(x_{i}))
\nonumber\\&=\inf_{P_{[a,b]}}
\sum_{i=0}^{n-1}\sup_{x\in F\cap
I}f(x)(S_{F}^{\alpha}(x_{i+1})-S_{F}^{\alpha}(x_{i})).
\end{align}

\end{Definition}

\subsection{Fractal Calculus on Fractal Curves}
We begin with defining the key concepts in fractal calculus on fractal curves \cite{parvate2011calculus}. By the way, we recall that a fractal curve $F\subset \mathbb{R}^n$ is  parametrizable if there exists a bijective and continuous function $\mathbf{w}:[a_0, b_0]\rightarrow\mathbb{R}$. Moreover we recall also that by $C(a,b)$ we denote the segment of the curve lying between the points $\mathbf{w}(a)$ and $\mathbf{w}(b)$ on the fractal curve $F$ \cite{parvate2011calculus}.

\begin{Definition}
For a fractal curve denoted as $F$ and a subdivision denoted as $P_{[a,b]}$ where $[a,b] \subset \mathbb{R}$, the mass function is given by
\begin{equation}
\gamma^{\alpha}(F,a,b)=\lim_{\delta\rightarrow0} \inf_{|P|\leq
\delta}\sum_{i=0}^{n-1}
\frac{|\mathbf{w}(t_{i+1})-\mathbf{w}(t_{i})|^{\alpha}}{\Gamma(\alpha+1)},
\end{equation}
where $|\cdot|$ represents the Euclidean norm in $\mathbb{R}^{n}$, $1\leq
\alpha\leq n$, $P_{[a,b]}=\{a=t_{0},...,t_{n}=b\}$, and
$|P|=\max_{0\leq i\leq n-1}(t_{i+1}-t_{i})$ for a subdivision $P_{[a,b]} $.
\end{Definition}

\begin{Definition}
The $\gamma$-dimension of the fractal curve $F$ is defined as
\begin{align}
\dim_{\gamma}(F) &= \inf\{\alpha:\gamma^{\alpha}(F,a,b)=0\} \nonumber \\
&= \sup\{\alpha:\gamma^{\alpha}(F,a,b)=\infty\}
\end{align}

\end{Definition}

\begin{Definition}
Let $p_0\in[a_0,b_0]$ be arbitrary but fixed. The mass of the fractal of a fractal  curve $F$ is defined as:
\begin{equation}
S_{F}^{\alpha}(u)=\begin{cases}
\gamma^{\alpha}(F,p_{0},u), & u\geq p_{0} ; \\
-\gamma^{\alpha}(F,u,p_{0}), & u<p_{0}.
\end{cases}
\end{equation}
The mass of the fractal curve $F$ up to point $u$ is provided by $S_F^{\alpha}(u)$, where $u\in[a_0,b_0].$
\end{Definition}

\begin{Definition}
 Let $S^\alpha_F(u)=J(\theta).$ The fractal $F^{\alpha}$-derivative of a function $f$ at a point $\theta\in F$ is defined as:
\begin{equation}
D_{F}^{\alpha}f(\theta)=\underset{ \theta'\rightarrow
\theta}{F_{-}lim}~
\frac{f(\theta')-f(\theta)}{J(\theta')-J(\theta)},
\end{equation}
if $F_{-}lim$ exists (here the $F_{-}lim$  represents the fractal limit as it shows in \cite{parvate2011calculus}).
\end{Definition}

\begin{remark}
It is worth noting that the Euclidean distance from the origin to a point
$\theta=\mathbf{w}(u)$ is given by
$L(\theta)=L(\mathbf{w}(u))=|\mathbf{w}(u)|.$
\end{remark}

\begin{Definition}
The fractal integral or $F^{\alpha}$-integral is defined as
\begin{align}
\int_{C(a,b)}f(\theta)d_{F}^{\alpha}\theta &= \sup_{P[a,b]}\sum_{i=0}^{n-1}
\inf_{\theta\in
C(t_{i},t_{i+1})}f(\theta)(J(\theta_{i+1})-J(\theta_{i})) \nonumber \\
&= \inf_{P[a,b]}\sum_{i=0}^{n-1}
\sup_{\theta\in
C(t_{i},t_{i+1})}f(\theta)(J(\theta_{i+1})-J(\theta_{i})),
\end{align}
where $t_{i}=\mathbf{w}^{-1}(\theta_{i})$ and $f$ is a bounded function on a fractal curve $F$.
\end{Definition}

\section{Solving Fractal Differential Equations by Method of Integrating Factor \label{2g}}
In this section, we delve into the concept of differential equations on fractal curves and  fractal sets. We start by considering an $\alpha$-order linear differential equation on a fractal curve $F\subset \mathbb{R}^{n}$:
\begin{equation}\label{eq:fractal_diff_eq}
D_{F}^{\alpha}y(\theta) + p(\theta)y(\theta) = g(\theta),~~~\theta\in F,
\end{equation}
where $p$ and $g$ are $F$-continuous functions defined on the fractal curve $F$, with $\varphi_{1}<\theta<\varphi_{2}$, and $\varphi_{1}, \varphi_{2}\in F$.

\begin{definition}
Let $\psi : F\subset \mathbb{R}^n \rightarrow \mathbb{R}$ be a function. If $\psi$ has fractal $F^{\alpha}$-derivative at each point $\theta\in F,$ therefore $\psi$ is called the solution of the $\alpha$-order differential equation if substituted in the Eq.\eqref{eq:fractal_diff_eq} satisfies it.
\end{definition}

\begin{theorem} (Method of the integration factor)

Let $F\subset \mathbb{R}^n$ be a fractal curve,  therefore there exists a fractal $F^{\alpha}$-differentiable function defined on a fractal curve $F,$ called integration factor,  such that  all the solutions of the Eq. Eq.\eqref{eq:fractal_diff_eq} are expressed by:

\begin{equation}\label{iuokmju}
  y(\theta)=\frac{\int\mu(\theta)g(\theta)d^{\alpha}_F(\theta)\,+\,J(c)}{\mu(\theta)}.
\end{equation}
Here $\mu(\theta)$ is the integration factor and $J(c)$ is an arbitrary constant.

\end{theorem}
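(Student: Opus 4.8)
The plan is to transplant the classical integrating-factor argument into the $F^{\alpha}$-calculus of Parvate and Gangal, using the fractal product rule, the chain (conjugacy) rule, and the fractal fundamental theorem of calculus as the three analytic ingredients. The goal is to choose a nonvanishing $F^{\alpha}$-differentiable function $\mu(\theta)$ so that, after multiplying Eq.\eqref{eq:fractal_diff_eq} through by $\mu$, the left-hand side collapses to a single fractal derivative $D_{F}^{\alpha}(\mu y)$; once that is achieved, one $F^{\alpha}$-integration isolates $y(\theta)$.

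First I would construct the integrating factor by demanding that the cross term disappear, i.e. that
\begin{equation}
D_{F}^{\alpha}\mu(\theta)=p(\theta)\,\mu(\theta).
\end{equation}
This is itself a homogeneous $\alpha$-order equation on $F$, and I would solve it with the fractal exponential: setting $P(\theta)=\int p(\theta)\,d_{F}^{\alpha}\theta$ for an $F^{\alpha}$-antiderivative of the $F$-continuous function $p$ (which exists and satisfies $D_{F}^{\alpha}P=p$ by the fractal fundamental theorem), I take
\begin{equation}
\mu(\theta)=\exp\!\left(\int p(\theta)\,d_{F}^{\alpha}\theta\right)=e^{P(\theta)}.
\end{equation}
Applying the chain rule for the $F^{\alpha}$-derivative to the composition of the ordinary differentiable map $s\mapsto e^{s}$ with the $F^{\alpha}$-differentiable map $P$ yields $D_{F}^{\alpha}\mu=e^{P}\,D_{F}^{\alpha}P=p\,\mu$, so $\mu$ is a valid, strictly positive, hence nonvanishing, integrating factor.

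Next I would multiply Eq.\eqref{eq:fractal_diff_eq} by $\mu$ to get $\mu\,D_{F}^{\alpha}y+\mu p\,y=\mu g$, and invoke the fractal Leibniz rule together with $D_{F}^{\alpha}\mu=p\mu$ to recognize the left-hand side as a total fractal derivative:
\begin{equation}
D_{F}^{\alpha}\big(\mu(\theta)\,y(\theta)\big)=\mu(\theta)\,D_{F}^{\alpha}y(\theta)+\big(D_{F}^{\alpha}\mu(\theta)\big)\,y(\theta)=\mu(\theta)\,g(\theta).
\end{equation}
Integrating both sides with the $F^{\alpha}$-integral and using the fundamental theorem of fractal calculus gives $\mu(\theta)\,y(\theta)=\int\mu(\theta)g(\theta)\,d_{F}^{\alpha}\theta+J(c)$, where the additive constant $J(c)$ records the freedom in the antiderivative; dividing by the nonvanishing $\mu(\theta)$ produces exactly Eq.\eqref{iuokmju}. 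For the claim that these are \emph{all} solutions, I would argue conversely that any solution $y$ makes $D_{F}^{\alpha}\big(\mu y-\int\mu g\,d_{F}^{\alpha}\theta\big)=0$, and a function with vanishing $F^{\alpha}$-derivative on the connected fractal curve is constant, so the parameter $J(c)$ sweeps out the entire solution family.

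I expect the main obstacle to be the analytic justification of the two transfer rules in the fractal setting rather than the algebra: namely verifying the fractal Leibniz rule $D_{F}^{\alpha}(\mu y)=(D_{F}^{\alpha}\mu)y+\mu D_{F}^{\alpha}y$ and the chain rule for composing an ordinary smooth function with an $F^{\alpha}$-differentiable one, since both rest on controlling the $F$-limit difference quotients through the staircase $J(\theta)=S_{F}^{\alpha}(u)$ and require $\mu,y$ to be genuinely $F^{\alpha}$-differentiable with $\mu$ never zero. A secondary technical point is ensuring that $P=\int p\,d_{F}^{\alpha}\theta$ exists as a bounded, $F^{\alpha}$-differentiable antiderivative on the range $\varphi_{1}<\theta<\varphi_{2}$, which is precisely where the $F$-continuity hypotheses on $p$ and $g$ are used.
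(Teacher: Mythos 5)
Your proposal is correct and follows essentially the same route as the paper: impose $D_{F}^{\alpha}\mu=p\mu$, take $\mu=\exp\left(\int p\,d_{F}^{\alpha}\theta\right)$, collapse the left-hand side to $D_{F}^{\alpha}(\mu y)$ via the fractal Leibniz rule, and integrate. Your version is in fact slightly more complete, since you verify $D_{F}^{\alpha}\mu=p\mu$ directly by the fractal chain rule rather than by formally integrating $D_{F}^{\alpha}\mu/\mu=p$, and you supply the converse argument (a function with vanishing $F^{\alpha}$-derivative is constant) showing that the formula captures \emph{all} solutions, a point the paper leaves implicit.
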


\begin{proof}
  To solve Eq. \eqref{eq:fractal_diff_eq}, we introduce an integrating factor $\mu(\theta)$ and multiply both sides of the equation by it:
\begin{equation}\label{eq:modified_fractal_diff_eq}
\mu(\theta) D_{F}^{\alpha}y(\theta) + \mu(\theta)p(\theta)y(\theta) = \mu(\theta)g(\theta).
\end{equation}
For this modified equation to hold, we require the following relationship:
\begin{equation}\label{eq:integrating_factor_condition}
D_{F}^{\alpha}\mu(\theta) = p(\theta) \mu(\theta).
\end{equation}
Assuming $\mu(\theta) > 0$, we can express Eq. \eqref{eq:integrating_factor_condition} as:
\begin{equation}\label{eq:integrating_factor_condition_positive}
\frac{D_{F}^{\alpha}\mu(\theta)}{\mu(\theta)} = p(\theta).
\end{equation}
By applying fractal integration, we arrive at the integral equation:
\begin{equation}\label{eq:integrating_factor_integral}
\ln(\mu(\theta)) = \int p(\theta)  d_{F}^{\alpha}\theta + J(k),
\end{equation}
where $J(k)$ is an arbitrary constant of integration. Setting $J(k) = 0$, we obtain the expression for the integrating factor:
\begin{equation}\label{eq:integrating_factor_expression}
\mu(\theta) = \exp\left(\int p(\theta)  d_{F}^{\alpha}\theta\right).
\end{equation}
After determining $\mu(\theta)$, we substitute it back into Eq. \eqref{eq:modified_fractal_diff_eq}, yielding:
\begin{equation}\label{eq:final_modified_diff_eq}
D_{F}^{\alpha}(\mu(\theta) y(\theta)) = \mu(\theta) g(\theta).
\end{equation}
Integrating both sides of the equation using fractal integration, we arrive at the solution for the original differential equation \eqref{eq:fractal_diff_eq}:
\begin{equation}\label{eq:fractal_diff_eq_solution}
y(\theta) = \frac{\int \mu(\theta) g(\theta)  d_{F}^{\alpha}\theta + J(c)}{\mu(\theta)},
\end{equation}
which completes the proof.
\end{proof}
\begin{remark}
By the previous theorem it follows that there are infinitely many functions $y(\theta)$ that satisfy the given $\alpha$-order fractal
differential equation on the fractal curve $F$.
\end{remark}

\begin{example}
Consider an $\alpha$-order fractal differential equation on fractal curve $F$ of the form:
\begin{equation}\label{eq:example_fractal_diff_eq}
D_{F}^{\alpha}y(\theta) = ry(\theta) + k,
\end{equation}
where $r$ and $k$ are constants. By  Equation \eqref{eq:fractal_diff_eq_solution}, we can determine the infinite solutions for the given equations \eqref{eq:example_fractal_diff_eq} as follows:
\begin{equation}\label{eq:example_solution}
y(\theta) = -\frac{k}{r} + c \exp(rJ(\theta)),
\end{equation}
where $c$ is an integration constant, and $J(\theta)$ arises from the fractal integration process.
\end{example}
\begin{theorem}
Let $p(\theta)$ and $g(\theta)$ two $F$-continuous functions defined on a fractal curve $F$ with $\varphi_1 < \theta < \varphi_2$ and $\varphi_1,\varphi_2\in F.$ Let $\theta_0 \in (\varphi_1,\varphi_2)$.  Therefore for each $y_0\in\mathbb{ R}$ there exists
an unique solution $y = \psi (\theta)$, defined at least in a neighborhood of $\theta_0$,  of the following $\alpha$-order fractal differential equation:
 \begin{equation}\label{eq:theorem_fractal_diff_eq}
   D^{\alpha}_Fy(\theta)+p(\theta)y(\theta)=g(\theta),
 \end{equation}
with the initial condition $y(\theta_0)=y_0.$
\end{theorem}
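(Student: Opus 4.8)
The plan is to prove existence constructively by exhibiting the solution produced by the integrating factor of the previous theorem, and then to prove uniqueness separately via the difference of two putative solutions. Since $p$ is $F$-continuous on the segment between $\varphi_1$ and $\varphi_2$, its $F^{\alpha}$-integral exists, so I would first define the integrating factor anchored at $\theta_0$,
\[
\mu(\theta)=\exp\left(\int_{\theta_0}^{\theta}p(\tau)\,d_{F}^{\alpha}\tau\right),
\]
which is positive, $F^{\alpha}$-differentiable, and satisfies $\mu(\theta_0)=1$ together with $D_{F}^{\alpha}\mu=p\,\mu$. I would then propose the candidate
\[
\psi(\theta)=\frac{1}{\mu(\theta)}\left(y_0+\int_{\theta_0}^{\theta}\mu(\tau)g(\tau)\,d_{F}^{\alpha}\tau\right),
\]
which is the general solution of the earlier theorem with the arbitrary constant $J(c)$ fixed to the value $y_0$ forced by evaluating at $\theta_0$.

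Next I would verify that $\psi$ actually solves the equation. The two ingredients are the fundamental theorem of fractal calculus, which gives $D_{F}^{\alpha}\int_{\theta_0}^{\theta}\mu g\,d_{F}^{\alpha}\tau=\mu(\theta)g(\theta)$, and the product (Leibniz) rule for the $F^{\alpha}$-derivative applied to $\mu(\theta)\psi(\theta)$. Writing $\mu\psi=y_0+\int_{\theta_0}^{\theta}\mu g\,d_{F}^{\alpha}\tau$ and applying $D_{F}^{\alpha}$ gives $D_{F}^{\alpha}(\mu\psi)=\mu g$; expanding the left side by the product rule and using $D_{F}^{\alpha}\mu=p\,\mu$ recovers $D_{F}^{\alpha}\psi+p\,\psi=g$. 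Evaluating $\psi(\theta_0)=y_0/\mu(\theta_0)=y_0$ confirms the initial condition, establishing existence on any neighborhood of $\theta_0$ on which $p$ and $g$ remain $F$-continuous.

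For uniqueness, I would take two solutions $y_1,y_2$ and set $w=y_1-y_2$, so that $D_{F}^{\alpha}w+p\,w=0$ with $w(\theta_0)=0$. Multiplying by $\mu$ and using the product rule in reverse yields $D_{F}^{\alpha}(\mu w)=0$. The decisive step is the fractal analogue of the statement that a function with vanishing $F^{\alpha}$-derivative on a connected piece of the curve is constant; applying it gives $\mu(\theta)w(\theta)\equiv\mu(\theta_0)w(\theta_0)=0$, and since $\mu>0$ we conclude $w\equiv0$, i.e. $y_1=y_2$.

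I expect the main obstacle to be the justification of these fractal calculus rules in the needed generality: specifically the fundamental theorem of fractal calculus for the $F^{\alpha}$-integral of the $F$-continuous integrand $\mu g$, the Leibniz rule for $D_{F}^{\alpha}$ of the product $\mu\psi$, and above all the \emph{zero-derivative implies constant} lemma that underpins uniqueness. These rest on properties of the staircase $S_{F}^{\alpha}$ and the fractal limit from the Parvate--Gangal framework; I would invoke them from the references rather than reprove them, taking care that the segment of $F$ around $\theta_0$ is $\alpha$-perfect with finite $S_{F}^{\alpha}$ so that the derivative and integral are genuinely defined.
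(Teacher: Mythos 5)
Your proof is correct and its existence half coincides with the paper's: both construct the solution by the integrating factor anchored at $\theta_0$ (the paper writes this as choosing the primitive $\Lambda$ of $p$ with $\Lambda(\theta_0)=0$ and setting $\mu=e^{\Lambda}$, which is exactly your $\mu(\theta)=\exp\bigl(\int_{C(\theta_0,\theta)}p\,d_F^{\alpha}\tau\bigr)$), and both fix the constant to $y_0$ by evaluating at $\theta_0$. The uniqueness halves share the same reduction --- the difference $w=y_1-y_2$ satisfies the homogeneous equation with $w(\theta_0)=0$ --- but then diverge in the key lemma invoked. The paper observes that $w\equiv 0$ is \emph{a} solution of the homogeneous initial value problem and appeals to the conjugacy between $F^{\alpha}$-calculus and ordinary calculus to assert it is the \emph{only} one, i.e.\ it transfers the entire uniqueness question to the classical ODE $z'+pz=0$, $z(t_0)=0$. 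You instead multiply by $\mu$ to get $D_F^{\alpha}(\mu w)=0$ and invoke the ``vanishing $F^{\alpha}$-derivative implies constant'' lemma. Your route is more self-contained and makes explicit exactly which fractal-calculus fact carries the burden (a Rolle-type consequence of the fundamental theorem in the Parvate--Gangal framework, valid on $\alpha$-perfect pieces with finite $S_F^{\alpha}$, as you correctly flag); the paper's route is shorter but leans on the conjugacy theorem as a black box. Both are legitimate given the cited background, so this is a stylistic rather than substantive difference.
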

\begin{proof}
Let us denote by $\Lambda(\theta)$ a primite of $p(\theta)$.  The existence of a solution of the given $\alpha$-order fractal differential equation it was already shown by the method of the integration factor already discussed.  Therefore, denoted by $\mu(\theta)= e^{\Lambda(\theta)}$ we arrive at:
\begin{equation}\label{iop}
  y(\theta)=\,e^{-\Lambda(\theta)}\left(\int\ g(\theta)\,\mu(\theta)\,d^{\alpha}_F(\theta)\,+\,c\right),
\end{equation}
where $c$ is a constant of integration.
Now, replacing the initial condition $y(\theta_0)=y_0$ in the previous equation and choosing the primitive $\Lambda(\theta)$ such that $\Lambda(\theta_0)=0, \ ( i.e.\ \Lambda(\theta)=\int_{C(\theta_0,\theta)}\, p(\tau)d^{\alpha}_F\tau)$  we get the following solution:
\begin{equation}\label{iop}
  y(\theta)= \ \,e^{-\Lambda(\theta)}\left(\int _{C(\theta_0,\theta)}\mu(\tau)g(\tau)d^{\alpha}_F\tau\ +\,y_0\right ).
\end{equation}
To prove the uniqueness of the solution, let us suppose by contradiction that there are two different solutions $y_1(\theta)$ and $y_2(\theta)$ of the Eq.\eqref{eq:theorem_fractal_diff_eq} with the same initial condition $y_1(\theta_0)=y_2(\theta_0)=y_0$.\\
Let $z(\theta)=y_1(\theta)-y_2(\theta)$. Therefore, by the linearity of the $\alpha$-order differential equation, substituting $z(\theta)$ into Eq. \eqref{eq:theorem_fractal_diff_eq}, we obtain:
\begin{equation}\label{rwq1}
  D^{\alpha}_Fz(\theta)\,+\,p(\theta)z(\theta)=0.
\end{equation}
 Now, it is trivial to observe that the function $z(\theta)=0,$ identically null, is a solution of $D^{\alpha}_Fz(\theta)\,+\,p(\theta)z(\theta)=0$ with the initial condition $z(\theta_0)=0.$
So, by the conjugacy of $F^{\alpha}$-calculus and the ordinary calculus \cite{parvate2011calculus}, we can conclude that $z(\theta)=0$ is the unique solution of
\begin{equation}\label{opklxzsw}
  D^{\alpha}_Fz(\theta)\,+\,p(\theta)z(\theta)=0,
\end{equation}
with the initial condition $z(\theta_0)=0.$ Therefore, by the definition of $z(\theta)$ we have that $y_1(\theta)=y_2(\theta)$ and this is a contradiction having assumed that Eq.\eqref{eq:theorem_fractal_diff_eq}, with the initial condition $y(\theta_0)=0,$ could admit two different solutions.
\end{proof}
\begin{example}
Consider the fractal differential equation on a fractal set, expressed as
\begin{equation}\label{wwwwq}
D_{t}^{\alpha}y(t)+\frac{1}{2}y(t)=10+5\sin(2S_{F}^{\alpha}(t))
\end{equation}
where the initial condition is given as $y(0)=0$. To solve Eq.\eqref{wwwwq}, we  introduce the integrating factor $\mu(t)=\exp(S_{F}^{\alpha}(t)/2)$. By multiplying Eq.\eqref{wwwwq} with this factor, performing fractal integration, and applying the initial condition, the resulting solution is:
\begin{align}\label{ssa}
y(t)&=20-\frac{40}{17}\cos(2S_{F}^{\alpha}(t))+\frac{10}{17}
\sin(2S_{F}^{\alpha}(t))-\frac{300}{17}
\exp\left(-\frac{S_{F}^{\alpha}(t)}{2}\right)\nonumber\\&
\propto 20-\frac{40}{17}\cos(2t^{\alpha})+\frac{10}{17}
\sin(2t^{\alpha})-\frac{300}{17}
\exp\left(-\frac{t^{\alpha}}{2}\right).
\end{align}
\begin{figure}[H]
  \centering
  \includegraphics[scale=0.5]{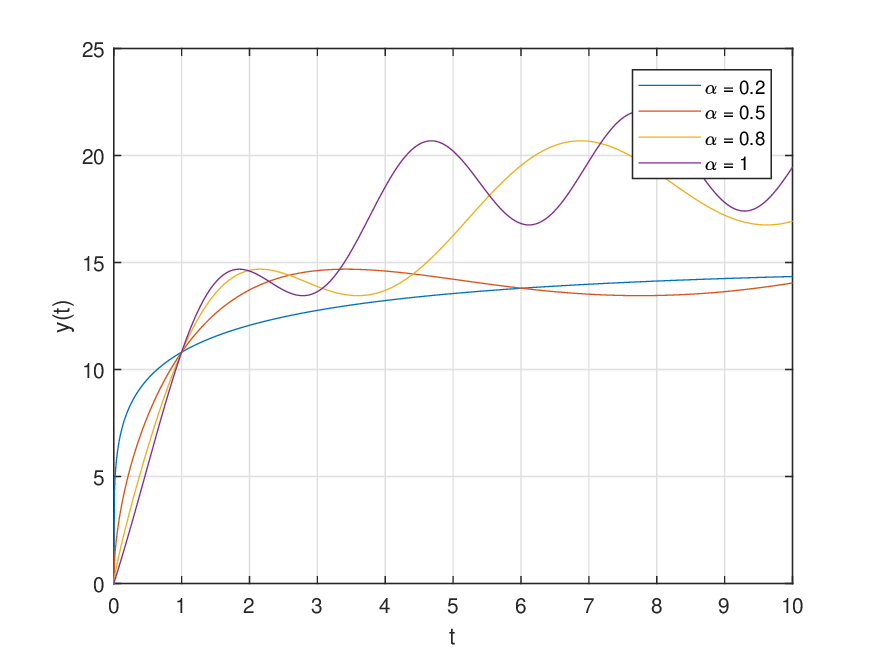}
  \caption{Plot of Eq.\eqref{ssa} for different values of $\alpha$}\label{iiio}
\end{figure}
In Figure \ref{iiio}, we have plotted Eq. \eqref{ssa} for various values of $\alpha$. This plot illustrates that as the dimension of the fractal set supporting the function increases, the solution exhibits greater oscillations.
\end{example}

\subsection{Fractal Bernoulli Differential Equation}
The fractal version of the Bernoulli differential equation, using the fractal derivative, is expressed as:
\begin{equation}\label{uuiuuui}
D_F^\alpha y(\theta) + q(\theta)y(\theta) = r(\theta)y(\theta)^\beta,~~~\theta\in F,~ \beta \in \mathbb{R},
\end{equation}
where $q(\theta)$ and $r(\theta)$ are $F$-continuous functions defined on a fractal curve. This equation employs the fractal derivative to describe the behavior of the function $y$ on a fractal curve, allowing for the analysis and solution of differential equations on fractal geometries.
In order to give a technique to solve the assigned Fractal Bernoulli differential equation, let us observe, first of all, that if $\beta=0$ then $y^{\beta}(\theta)=y^0(\theta)=1$ and the Eq.\eqref{uuiuuui} becomes equal to the Eq.\eqref{eq:fractal_diff_eq} with $r(\theta)=g(\theta).$ Instead, if $\beta=1$, then $y^{\beta}(\theta)=y(\theta)$ and the Eq.\eqref{uuiuuui} becomes similar to the Eq.\eqref{eq:fractal_diff_eq},  here $q(\theta)-r(\theta)=p(\theta).$ In all other cases,  to solve the fractal version of the Bernoulli differential equation, using fractal derivatives,  we apply the integrating factor method already discussed.  Before showing this technique we note that if $\beta>0,$ then $y(\theta)=0$ is a solution of the Eq.\eqref{uuiuuui}.  Now,  the solution method is the following: preliminarily divide both sides of the equation by $y^{\beta}(\theta)$, thus obtaining
\begin{equation}\label{rtea}
  y^{-\beta}(\theta)\left(D^{\alpha}_F y(\theta)\,+\,q(\theta)\,y(\theta)\right)\,=\,r(\theta),
\end{equation}
subsequently set $z(\theta)=y^{1-\beta}(\theta)$ and applying the fractal differentiation rule of composite functions:
$D^{\alpha}_F z(\theta)=(1-\beta)\,y^{-\beta}(\theta)\,D^{\alpha}_F y(\theta),$ so the new Fractal Bernoulli  differential equation is
\begin{equation}\label{op25}
  D^{\alpha}_F z(\theta)\,+\,(1-\beta)\,q(\theta)\,z(\theta)\,=\,(1-\beta)\,r(\theta).
\end{equation}
Therefore apply the integrating factor method and set $y(\theta)=\left(z(\theta)\right)^{\frac{1}{1-\beta}},$ so the solution of the assigned Fractal Bernoulli differential equation is:
\begin{equation}\label{xxqwe}
  y(\theta)=\left(\frac{\int r(\theta)\,\mu(\theta)\,d^{\alpha}_F(\theta)\,+\,c}{\mu(\theta)}\right)^{\frac{1}{1-\beta}},
\end{equation}
where $\Lambda(\theta)$ is a primite of $q(\theta)$ and $\mu(\theta)= e^{\Lambda(\theta)}.$

\begin{example}
 Let us consider the following Fractal Bernoulli differential equation on a given fractal curve $F$:
$$D^{\alpha}_F y(\theta)\,=\,2\,\frac{y(\theta)}{S^{\alpha}_F(\theta)}\,+\,2\,S^{\alpha}_F(\theta)\,\sqrt{y(\theta)}$$ here $\beta=1/2,$ so $y(\theta)=0$  is a solution of the given Fractal Bernoulli differential equation.
Let us suppose, now, that $y(\theta)\neq 0.$
Let us divide both sides of the equation by $\sqrt{y(\theta)}$ and let us set $z(\theta)=\sqrt{y(\theta)}.$ Thus we obtain the following $\alpha$-order linear differential equation  on a given fractal curve $F$:
\begin{equation}\label{rteww}
  D^{\alpha}_F z(\theta)=S^{\alpha}_F(\theta)+\frac{z(\theta)}{S^{\alpha}_F(\theta)}
\end{equation}
According to the formula of the integrating factor method we have:
\begin{equation}\label{rteqa}
  z(\theta)=S^{\alpha}_F(\theta)(S^{\alpha}_F(\theta)\,+\,c).
\end{equation}
Therefore the solutions of the assigned Fractal Bernoulli differential equation are:
$y(\theta)=\left((S^{\alpha}_F(\theta))^2+c\,S^{\alpha}_F(\theta)\right)^2$ and $y(\theta)=0.$
\end{example}
\section{Solving Fractal Differential Equations by Method of Separation \label{3g}}
The equation representing a separable $\alpha$-order fractal differential equation is given as \cite{khalili2023non}:
\begin{equation}\label{yyuhh54p}
D_{F}^{\alpha}y(\theta) = \frac{d_{F}^{\alpha}y}{d_{F}^{\alpha}\theta}= f(\theta,y),~~~\theta\in F,
\end{equation}
with the initial condition
\begin{equation}\label{iopp}
  y(\theta_{0})=y_{0}.
\end{equation}
Here, $f(\theta,y)$ takes a linear form with respect to $y$. The equation \eqref{yyuhh54p} can be rearranged as:
\begin{equation}\label{plmnb}
M(\theta,y)+N(\theta,y)\frac{d_{F}^{\alpha}y}{d_{F}^{\alpha}\theta}=0,
\end{equation}
where $M(\theta,y)=-f(\theta,y)$ and $N(\theta,y)=1$. By considering $M(\theta,y)=M(\theta)$ and $N(\theta,y)=N(y)$, we arrive at:
\begin{equation}\label{okpli}
M(\theta)d_{F}^{\alpha}\theta+N(y)d_{F}^{\alpha}y=0.
\end{equation}
This equation is referred to as a separable fractal differentiable equation. To solve \eqref{okpli}, we introduce functions $D_{F}^{\alpha}H_{1}(\theta)=M(\theta)$ and $D_{F}^{\alpha}H_{2}(y)=N(y)$. Therefore Eq. \eqref{okpli} has the following expression:
\begin{equation}\label{wqqaqq}
D_{F}^{\alpha}H_{1}(\theta)+D_{F}^{\alpha}H_{2}(y)
\frac{d_{F}^{\alpha}y}{d_{F}^{\alpha}\theta}=0.
\end{equation}
Now by fractal chain rule, we have:
\begin{equation}\label{mmnb85}
D_{F}^{\alpha}H_{2}(y)
\frac{d_{F}^{\alpha}y}{d_{F}^{\alpha}\theta}=
\frac{d_{F}^{\alpha}}{d_{F}^{\alpha}\theta}H_{2}(y).
\end{equation}
Consequently, by Eq. \eqref{wqqaqq}  and  Eq. \eqref{mmnb85}  we have:
\begin{equation}\label{wass}
\frac{d_{F}^{\alpha}}{d_{F}^{\alpha}\theta}[H_{1}(\theta)+H_{2}(y)]=0,
\end{equation}
and applying fractal integration, we obtain:
\begin{equation}\label{weedz}
H_{1}(\theta)+H_{2}(y)=c
\end{equation}
The Eq.\eqref{weedz} is the implicit solution of the Eq.\eqref{okpli}.
Now by substituting the initial condition into Eq.\eqref{weedz} we get:
\begin{equation}\label{Nuyt}
c=H_{1}(\theta_{0})+H_{2}(y_{0}).
\end{equation}
Finally by replacing \eqref{Nuyt} into \eqref{weedz}, we arrive at:
\begin{equation}\label{wwwq}
H_{2}(y)-H_{2}(y_{0})=\int_{y_{0}}^{y}N(s)d_{F}^{\alpha}s,~~~
H_{1}(\theta)-H_{1}(\theta_{0})=\int_{C(\theta_0,\theta)}M(s)d_{F}^{\alpha}s
\end{equation}
This leads to:
\begin{equation}\label{aqwerty}
\int_{y_{0}}^{y}N(s)d_{F}^{\alpha}s+
\int_{C(\theta_0,\theta)}M(s)d_{F}^{\alpha}s=0
\end{equation}
which is the implicit solution of \eqref{okpli}, satisfying the initial condition.

\begin{example}
Let's consider the fractal differential equation on a fractal curve  given by
\begin{equation}\label{yyhtuu}
D_{F}^{\alpha}y(\theta)=\frac{3J(\theta)^2+4J(\theta)+2}{2(y-1)},~~~y(0)=-1.
\end{equation}
We can rewrite Eq.\eqref{yyhtuu} as follows:
\begin{equation}
2(y-1)d_{F}^{\alpha}y=(3J(\theta)^2+4J(\theta)+2)d_{F}^{\alpha}\theta.
\end{equation}
Moreover, by fractal integration with respect to $y$ on the left side and with respect to $\theta$ on the right side, we obtain:
\begin{equation}\label{sweq}
y^{2}-2y=J(\theta)^{3}+2J(\theta)^{2}+2J(\theta)+c.
\end{equation}
Finally, by using the initial condition $y(0)=-1$ in  Eq. \eqref{sweq} we get:
\begin{equation}\label{er}
y^{2}-2y=J(\theta)^{3}+2J(\theta)^{2}+2J(\theta)+3.
\end{equation}
It can be further simplified to:
\begin{equation}\label{frtg}
y(\theta)=1-\sqrt{J(\theta)^{3}+2J(\theta)^{2}+2J(\theta)+4}.
\end{equation}
\begin{figure}[H]
  \centering
  \includegraphics[scale=0.5]{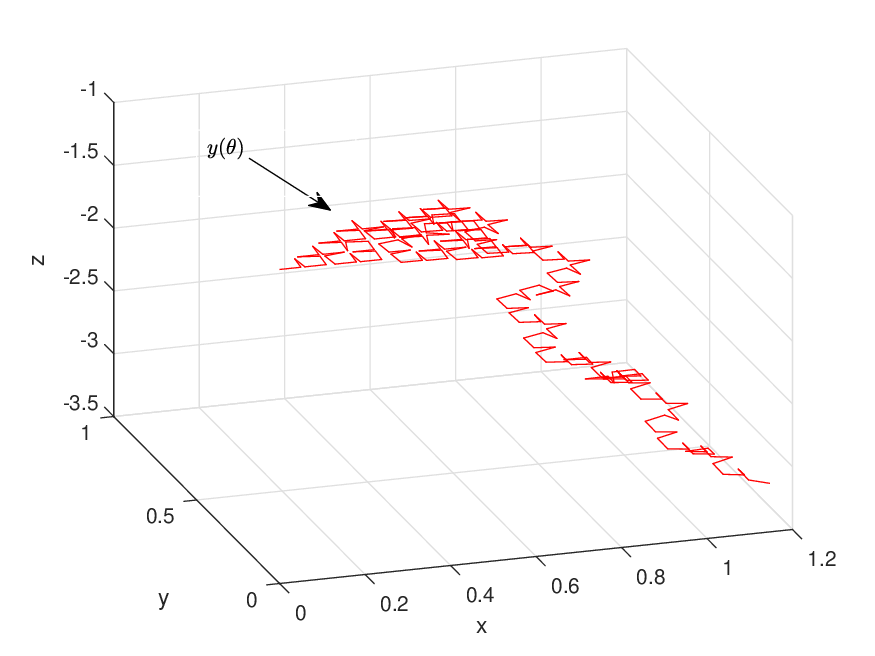}
  \caption{Graph of Eq.\eqref{frtg}.}\label{oo}
\end{figure}
In Figure \ref{oo}, we have depicted the graphical representation of the solution to equation \eqref{yyhtuu}.
\end{example}
\section{Applications \label{4g}}
 In this section, we explore practical applications of fractal differential equations.\\
\subsection{Fractal Compound Interest}
Consider a scenario where a sum of money is deposited in a bank, and both deposits and withdrawals occur at a constant rate $k$ \cite{boyce2021elementary}. The value $p(t)$ of the investment over time represents this situation. The rate of change of $p(t)$ in a fractal time context is given by the equation:
\begin{equation}\label{ewwqas}
D_{F}^{\alpha}p(t)=rp(t)+k,~~~~~~~t\in F,~~~r,k\in \mathbb{R}.
\end{equation}
Here, $r$ represents the annual interest rate and $k$ the constant rate of deposits or withdrawals. The initial condition is $p(0)=p_{0}$. The solution to Eq.\eqref{ewwqas} is derived as:
\begin{align}\label{olp}
p(t)&=p_{0}\exp(rS_{F}^{\alpha}(t))+\frac{k}{r}(\exp(rS_{F}^{\alpha}(t)-1))\\
&\propto p_{0}\exp(rt^{\alpha})+\frac{k}{r}(\exp(rt^{\alpha}-1)).
\end{align}
This solution showcases how the investment's value changes over fractal time, with implications for compound interest calculations.
\begin{figure}[H]
  \centering
  \includegraphics[scale=0.6]{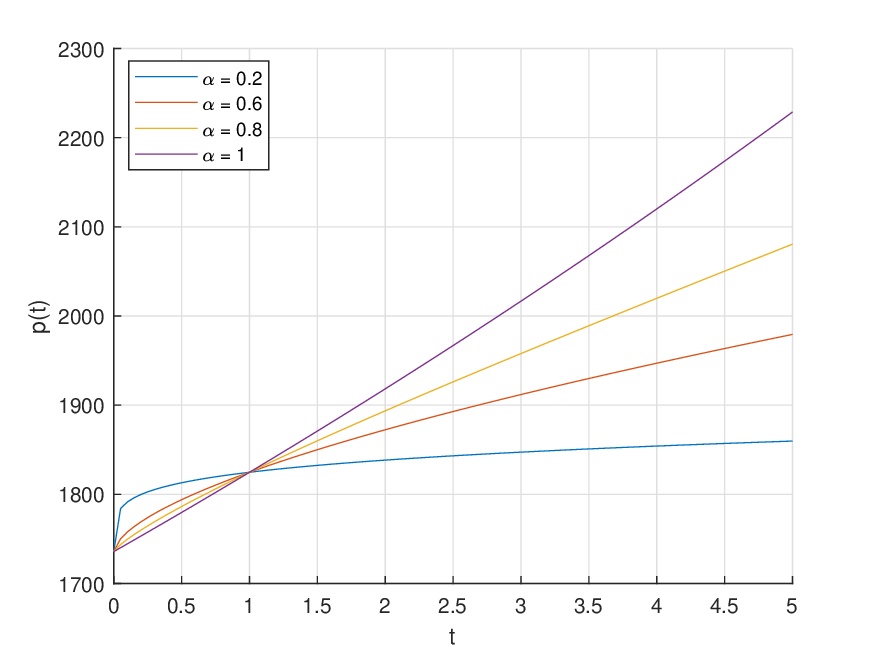}
  \caption{Investment growth for different values of $\alpha$}\label{uyiio}
\end{figure}
In Figure \ref{uyiio}, we illustrate the impact of the fractal time dimension on the growth of investment.

\subsection{Escape Velocity in Fractal Space and Time}
The concept of escape velocity, a fundamental aspect of physics, pertains to the minimum initial velocity required for an object to overcome a celestial body's gravitational pull \cite{boyce2021elementary}. By introducing the concept of fractal space and time, we extend the exploration of escape velocity to an innovative framework.\\
Assuming the absence of other forces and incorporating Newton's law, the equation of motion in fractal space and time can be expressed as:
\begin{equation}\label{eq:escape_velocity_eqn}
mD_{t}^{\alpha} v=-\frac{mgR^2}{(R+x)^2},~~~v(0)=0.
\end{equation}
Here, $m$ represents the mass of the object, $R$ is the radius of the celestial body (such as Earth), $x$ is the distance between the object and the celestial body, and $g$ signifies the acceleration due to gravity. The equation \eqref{eq:escape_velocity_eqn} is then transformed using the fractal chain rule to obtain:
\begin{equation}\label{eq:fractal_velocity_eqn}
vD_{x}^{\alpha} v=-\frac{gR^2}{(R+x)^2}.
\end{equation}
Solving this equation involves separating variables and performing fractal integration, resulting in the equation:
\begin{equation}\label{eq:max_altitude_eqn}
\frac{S_{F}^{\alpha}(v)^2}{2}=\frac{gR^2}{R+S_{F}^{\alpha}(x)}+c.
\end{equation}
Utilizing the initial conditions $x=0$ and $v=v_{0}$, the maximum altitude reached by the object can be determined as:
\begin{equation}\label{eq:max_altitude_sol}
x_{\text{max}}=\frac{S_{F}^{\alpha}(v_{0})^2 R}{2Rg-S_{F}^{\alpha}(v_{0})^2}.
\end{equation}
To find the initial velocity $S_{F}^{\alpha}(v_{0})$ required to elevate the object to the altitude $x_{\text{max}}$, the equation $S_{F}^{\alpha}(v)=0$ is employed, yielding:
\begin{equation}\label{eq:init_velocity_sol}
S_{F}^{\alpha}(v_{0})=\sqrt{2gR\frac{x_{\text{max}}}{R+x_{\text{max}}}}.
\end{equation}
As the concept of escape velocity extends to fractal space, the fractal escape velocity $S_{F}^{\alpha}(v_{e})$ is determined by allowing $x_{\text{max}}$ to approach infinity:
\begin{equation}\label{eq:fractal_escape_velocity}
S_{F}^{\alpha}(v_{e})=\sqrt{2gR},
\end{equation}
or equivalently,
\begin{equation}\label{eq:fractal_velocity_relation}
v_{e}\propto (2gR)^{1/2\alpha}.
\end{equation}
This study delves into the idea of escape velocity within the context of fractal space and time, offering fresh perspectives on the dynamics of entities in dimensions marked by intricacy and self-replication.
\begin{figure}[H]
  \centering
  \includegraphics[scale=0.5]{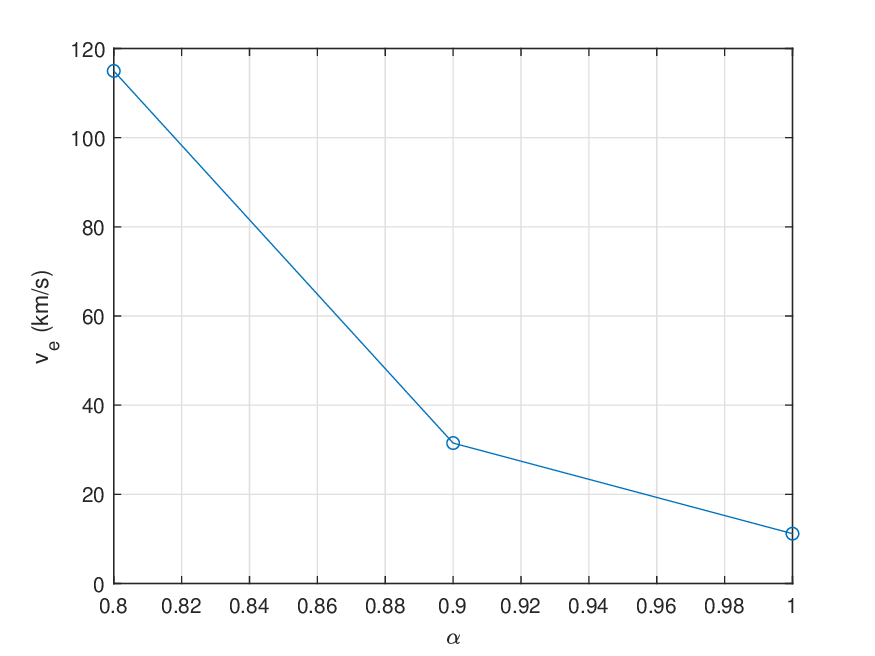}
  \caption{Escape Velocity vs. $\alpha$ }\label{ytuuy}
\end{figure}
As illustrated in Figure \ref{ytuuy}, we observe a pattern where reducing the spatial dimension necessitates an increase in escape velocity.
\subsection{Fractal Newton's Law of Cooling}
Newton's Law of Cooling is a fundamental principle in thermodynamics and heat transfer that describes how the rate of heat transfer between an object and its surroundings changes over time. It's commonly used to model the cooling or heating of an object through conduction, convection, or radiation \cite{molnar1969application,hurley1974application}.\\
The concept of the fractal Newton's Law of Cooling can be presented in the following manner:
\begin{equation}\label{reeeqaqw}
D_{t}^{\alpha}T = -k(T-T_{s}).
\end{equation}
Where:
\begin{align*}
&D_{t}^{\alpha}T ~ \text{signifies the fractal time derivative of temperature,} \\
&T  \text{ is the temperature of the object at any given time,} \\
&T_{s} \text{ is the temperature of the surrounding environment,} \\
& k >0 \text{ is the cooling or heating rate coefficient.}
\end{align*}
In this context, the rate of temperature change with respect to time is described through a fractal time derivative. This approach provides a unique lens through which to comprehend how objects interact with their surroundings, considering the intricacies of fractal time.
\subsection*{Estimation of Time of Death}
As an example, let's estimate the time of death for a body. We assume that the temperature of the body is discovered at time $t=0$ as $T_{0}$, and when it died at $t_{d}$, the temperature was $T_{d}$. By utilizing the cooling law, we can determine $t_{d}$ by solving Eq. \eqref{reeeqaqw}, which can be represented as:
\begin{equation}\label{wqas96}
  T(t) = T_{s} + (T_{0} - T) \exp(-kS_{F}^{\alpha}(t)).
\end{equation}
Here, $T(0) = T_{0}$. If we measure the temperature of the deceased body at time $t = t_{1}$ and find $T = T_{1}$, we can use Eq. \eqref{reeeqaqw} to derive the equation:
\begin{equation}\label{aqwee4}
  T_{1} - T_{s} = (T_{0} - T) \exp(-kS_{F}^{\alpha}(t_{1})).
\end{equation}
From this equation, we can deduce:
\begin{equation}\label{aqxs}
  k = -\frac{1}{S_{F}^{\alpha}(t_{1})}\ln \frac{T_{1} - T_{s}}{T_{0} - T_{s}} \propto -\frac{1}{t_{1}^{\alpha}}\ln \frac{T_{1} - T_{s}}{T_{0} - T_{s}}.
\end{equation}
By substituting $t=t_d$ and $T=T_d$ in the Eq. \eqref{wqas96}, we have:
\begin{equation}\label{e6er}
 S_{F}^{\alpha}(t_{d}) = -\frac{1}{k} \ln \frac{T_{d} - T_{s}}{T_{0} - T_{s}}.
\end{equation}
This equation can also be written as:
\begin{equation}\label{eer}
t_{d} = \bigg|-\frac{1}{k} \ln \frac{T_{d} - T_{s}}{T_{0} - T_{s}}\bigg|^{1/\alpha}.
\end{equation}
This methodology provides a means for estimating the time of death based on temperature measurements and the principles of the fractal Newton's Law of Cooling.
\begin{figure}[H]
  \centering
  \includegraphics[scale=0.5]{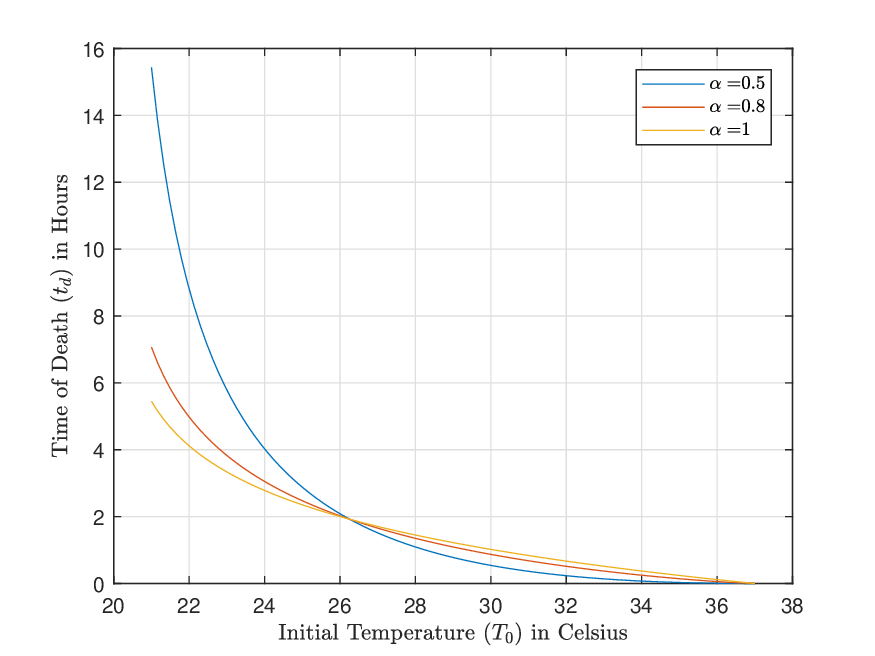}
  \caption{Estimation of Time of Death for Different $\alpha$ Values}\label{reftt}
\end{figure}
Figure \ref{reftt} illustrates that during the initial 2 hours, the cooling rate of the deceased body is faster in the fractal time model compared to the standard time case. However, this trend reverses beyond the 2-hour mark.

\section{Conclusion \label{5g}}
In conclusion, the study delved into a comprehensive exploration of various methods in the realm of fractal calculus. By investigating the method analogues of the separable method and integrating factor technique, we addressed $\alpha$-order differential equations. An intriguing extension of our analysis led to the resolution of Fractal Bernoulli differential equations, further broadening the scope of our inquiry.
The practical implications of our findings were exemplified through their applications in solving real-world problems. From fractal compound interest to the escape velocity of earth in fractal space and time, and even the estimation of time of death with the incorporation of fractal time, our research showcased the versatility of fractal calculus in tackling complex scenarios.
To enhance the accessibility of our work, we provided visual representations of our results. These aids not only conveyed our findings more effectively but also aided in the deeper understanding of the intricate concepts discussed.
A holistic perspective on the applications of fractal calculus has been offered, demonstrating its adaptability and utility in solving diverse problems across various fields.\\
\textbf{Declaration of Competing Interest:}\\
The authors declare that they have no known competing financial interests or personal relationships that could have appeared to influence the work reported in this paper.\\
\textbf{CRediT author statement:}\\
Alireza.K.Golmankhnaeh : Investigation, Methodology, Software, Writing- Original draft preparation.
 Donatella Bongiorno : Investigation, Writing- Reviewing and Editing.\\
\textbf{Declaration of generative AI and AI-assisted technologies in the writing process.}
During the preparation of this work the authors used GPT in order to correct grammar and writing. After using this GPT, the authors reviewed and edited the content as needed and takes full responsibility for the content of the publication.

\section{References}
\bibliographystyle{elsarticle-num}
\bibliography{Refrancesma9}

\begin{thebibliography}{10}
\expandafter\ifx\csname url\endcsname\relax
  \def\url#1{\texttt{#1}}\fi
\expandafter\ifx\csname urlprefix\endcsname\relax\def\urlprefix{URL }\fi
\expandafter\ifx\csname href\endcsname\relax
  \def\href#1#2{#2} \def\path#1{#1}\fi

\bibitem{Mandelbro}
B.~B. Mandelbrot, The Fractal Geometry of Nature, WH freeman New York, 1982.

\bibitem{falconer1999techniques}
K.~Falconer, Fractal Geometry: Mathematical Foundations and Applications, John Wiley \& Sons, 2004.

\bibitem{jorgensen2006analysis}
P.~E. Jorgensen, Analysis and probability: wavelets, signals, fractals, Vol. 234, Springer Science \& Business Media, 2006.

\bibitem{Qaswet}
P.~R. Massopust, Fractal Functions, Fractal Surfaces, and Wavelets, Academic Press, 2017.

\bibitem{Lapidus}
M.~L. Lapidus, G.~Radunovi{\'{c}}, D.~{\v{Z}}ubrini{\'{c}}, Fractal Zeta Functions and Fractal Drums, Springer International Publishing, 2017.

\bibitem{rogers1998hausdorff}
C.~A. Rogers, Hausdorff Measures, Cambridge University Press, 1998.

\bibitem{Ewqq}
N.~Lesmoir-Gordon, B.~Rood, Introducing Fractal Geometry, Icon Books, 2000.

\bibitem{Barnsley}
M.~F. Barnsley, Fractals Everywhere, Academic Press, 2014.

\bibitem{Gregory}
T.~G. Dewey, Fractals in Molecular Biophysics, Oxford University Press, 1998.

\bibitem{rosenberg2020fractal}
E.~Rosenberg, Fractal dimensions of networks, Vol.~1, Springer, 2020.

\bibitem{Tosatti}
L.~Pietronero, E.~Tosatti (Eds.), Fractals in Physics, Elsevier, 1986.

\bibitem{bishop2017fractals}
C.~J. Bishop, Y.~Peres, Fractals in probability and analysis, Vol. 162, Cambridge University Press, 2017.

\bibitem{Shlomo}
A.~Bunde, S.~Havlin, Fractals in Science, Springer, 2013.

\bibitem{kigami2001analysis}
J.~Kigami, Analysis on fractals, no. 143, Cambridge University Press, 2001.

\bibitem{Strichartz2}
R.~S. Strichartz, Differential Equations on Fractals, Princeton University Press, 2018.

\bibitem{giona1995fractal}
M.~Giona, Fractal calculus on [0, 1], Chaos Solit. Fractals 5~(6) (1995) 987--1000.

\bibitem{freiberg2002harmonic}
U.~Freiberg, M.~Z{\"a}hle, Harmonic calculus on fractals-a measure geometric approach {I}, Potential Anal. 16~(3) (2002) 265--277.

\bibitem{jiang1998some}
H.~Jiang, W.~Su, Some fundamental results of calculus on fractal sets, Commun. Nonlinear Sci. Numer. Simul. 3~(1) (1998) 22--26.

\bibitem{Bongiorno23}
D.~Bongiorno, Derivation and Integration on a Fractal Subset of the Real Line, IntechOpen, 2023, Ch.~7.
\newblock \href {https://doi.org/10.5772/intechopen.1001895} {\path{doi:10.5772/intechopen.1001895}}.

\bibitem{bongiorno2018derivatives}
D.~Bongiorno, Derivatives not first return integrable on a fractal set, Ric. di Mat. 67~(2) (2018) 597--604.

\bibitem{bongiorno2015fundamental}
D.~Bongiorno, G.~Corrao, On the fundamental theorem of calculus for fractal sets, Fractals 23~(02) (2015) 1550008.

\bibitem{bongiorno2015integral}
D.~Bongiorno, G.~Corrao, An integral on a complete metric measure space, Real Anal. Exch. 40~(1) (2015) 157--178.

\bibitem{Barlow}
M.~T. Barlow, E.~A. Perkins, Brownian motion on the sierpinski gasket, Probab. Theory Rel. 79~(4) (1988) 543--623.

\bibitem{stillinger1977axiomatic}
F.~H. Stillinger, Axiomatic basis for spaces with noninteger dimension, J. Math. Phys. 18~(6) (1977) 1224--1234.

\bibitem{e25071008}
A.~Deppman, E.~Meg\'{\i}as, R.~Pasechnik, Fractal derivatives, fractional derivatives and q-deformed calculus, Entropy 25~(7) (2023).

\bibitem{uchaikin2013fractional}
V.~V. Uchaikin, Fractional Derivatives for Physicists and Engineers, Vol.~2, Springer, 2013.

\bibitem{Trifcebook}
T.~Sandev, {\v{Z}}.~Tomovski, Fractional Equations and Models, Springer International Publishing, 2019.

\bibitem{nottale2011scale}
L.~Nottale, Scale relativity and fractal space-time: a new approach to unifying relativity and quantum mechanics, World Scientific, 2011.

\bibitem{patino2023brief}
J.~Pati{\~n}o~Ortiz, M.~Pati{\~n}o~Ortiz, M.-{\'A}. Mart{\'\i}nez-Cruz, A.~S. Balankin, A brief survey of paradigmatic fractals from a topological perspective, Fractal Fract. 7~(8) (2023) 597.

\bibitem{mondragon2023fractal}
H.~Mondrag{\'o}n-Nava, D.~Samayoa, B.~Mena, A.~S. Balankin, Fractal features of fracture networks and key attributes of their models, Fractal Fract. 7~(7) (2023) 509.

\bibitem{cruz2023percolation}
M.-{\'A}.~M. Cruz, J.~P. Ortiz, M.~P. Ortiz, A.~Balankin, Percolation on fractal networks: A survey, Fractal Fract. 7~(3) (2023) 231.

\bibitem{balankin2023dimensional}
A.~S. Balankin, M.~Martinez-Cruz, O.~Susarrey-Huerta, Dimensional crossover in the nearest-neighbor statistics of random points in a quasi-low-dimensional system, Mod. Phys. Lett. B 37~(06) (2023) 2250220.

\bibitem{balankin2022formation}
A.~S. Balankin, J.~Ram{\'\i}rez-Joachin, G.~Gonz{\'a}lez-L{\'o}pez, S.~Gut{\'\i}errez-Hern{\'a}ndez, Formation factors for a class of deterministic models of pre-fractal pore-fracture networks, Chaos Solit. Fractals 162 (2022) 112452.

\bibitem{parvate2009calculus}
A.~Parvate, A.~D. Gangal, Calculus on fractal subsets of real line-{I}: Formulation, Fractals 17~(01) (2009) 53--81.

\bibitem{parvate2011calculus}
A.~Parvate, S.~Satin, A.~Gangal, Calculus on fractal curves in $\mathbb{R}^{n}$, Fractals 19~(01) (2011) 15--27.

\bibitem{Alireza-book}
A.~K. Golmankhaneh, Fractal Calculus and its Applications, World Scientific, 2022.

\bibitem{golmankhaneh2016fractal}
A.~K. Golmankhaneh, D.~Baleanu, Fractal calculus involving gauge function, Commun. Nonlinear Sci. Numer. Simul. 37 (2016) 125--130.

\bibitem{gowrisankar2021fractal}
A.~Gowrisankar, A.~Khalili~Golmankhaneh, C.~Serpa, Fractal calculus on fractal interpolation functions, Fractal Fract. 5~(4) (2021) 157.

\bibitem{golmankhaneh2021fractalBro}
A.~K. Golmankhaneh, R.~T. Sibatov, Fractal stochastic processes on thin {C}antor-like sets, Mathematics 9~(6) (2021) 613.

\bibitem{golmankhaneh2019sumudu}
A.~K. Golmankhaneh, C.~Tun{\c{c}}, Sumudu transform in fractal calculus, Appl. Math. Comput. 350 (2019) 386--401.

\bibitem{Fourier1}
A.~K. Golmankhaneh, K.~Ali, R.~Yilmazer, M.~Kaabar, Local fractal {F}ourier transform and applications, Comput. Methods Differ. Equ. 10~(3) (2021) 595--607.

\bibitem{golmankhaneh2021tsallis}
A.~K. Golmankhaneh, Tsallis entropy on fractal sets, J. Taibah Univ. Sci. 15~(1) (2021) 543--549.

\bibitem{golmankhaneh2023initial}
A.~K. Golmankhaneh, I.~Tejado, H.~Sevli, J.~E.~N. Vald{\'e}s, On initial value problems of fractal delay equations, Applied Mathematics and Computation 449 (2023) 127980.

\bibitem{ELNABULSI2022112329}
R.~A. El-Nabulsi, A.~K. Golmankhaneh, P.~Agarwal, On a new generalized local fractal derivative operator, Chaos Solit. Fractals 161 (2022) 112329.

\bibitem{golmankhaneh2021equilibrium}
A.~K. Golmankhaneh, K.~Welch, Equilibrium and non-equilibrium statistical mechanics with generalized fractal derivatives: A review, Mod. Phys. Lett. A 36~(14) (2021) 2140002.

\bibitem{khalili2019fractalcat}
A.~K. Golmankhaneh, C.~Cattani, Fractal logistic equation, Fractal Fract. 3~(3) (2019) 41.

\bibitem{khalili2019random}
A.~K. Golmankhaneh, A.~Fernandez, Random variables and stable distributions on fractal {C}antor sets, Fractal Fract. 3~(2) (2019) 31.

\bibitem{banchuin2022noise}
R.~Banchuin, Noise analysis of electrical circuits on fractal set, COMPEL - Int. J. Comput. Math. Electr. Electron. Eng. 41~(5) (2022) 1464--1490.

\bibitem{golmankhaneh2018sub}
A.~K. Golmankhaneh, A.~S. Balankin, Sub-and super-diffusion on {C}antor sets: Beyond the paradox, Phys. Lett. A. 382~(14) (2018) 960--967.

\bibitem{balankin2023vector}
A.~S. Balankin, B.~Mena, Vector differential operators in a fractional dimensional space, on fractals, and in fractal continua, Chaos Solit. Fractals 168 (2023) 113203.

\bibitem{golmankhaneh2018fractalt}
A.~K. Golmankhaneh, A.~Fernandez, Fractal calculus of functions on cantor tartan spaces, Fractal Fract. 2~(4) (2018) 30.

\bibitem{khalili2021laplace}
A.~K. Golmankhaneh, S.~M. Nia, Laplace equations on the fractal cubes and casimir effect, Eur. Phys. J. Special Topics 230~(21) (2021) 3895--3900.

\bibitem{khalili2023non}
A.~K. Golmankhaneh, K.~Welch, C.~Serpa, P.~E. J{\o}rgensen, Non-standard analysis for fractal calculus, J. Anal. 31 (2023) 1895--1916.

\bibitem{boyce2021elementary}
W.~E. Boyce, R.~C. DiPrima, D.~B. Meade, Elementary differential equations and boundary value problems, John Wiley \& Sons, 2021.

\bibitem{molnar1969application}
G.~Molnar, H.~Hurley, R.~Ford, Application of {N}ewton's law to body cooling, Pfl{\"u}gers Archiv 311 (1969) 16--24.

\bibitem{hurley1974application}
J.~F. Hurley, An application of {N}ewton's law of cooling, Math. Teach. 67~(2) (1974) 141--142.

\end{thebibliography}






\end{document}